\newtheorem{thm}{Theorem}
\newtheorem{defn}{Definition}
\newtheorem{lemma}{Lemma}
\newtheorem{pro}{Proposition}
\newtheorem{rk}{Remark}
\newtheorem{cor}{Corollary}
\numberwithin{equation}{section} \setcounter{tocdepth}{1}
\begin{document}
\title [A dynamical system of temperature-dependent sex linked inheritance]
{A dynamical system of temperature-dependent sex linked
inheritance}

\author{Z.S. Boxonov,  U.A. Rozikov}

\address{Z.\ S.\ Boxonov\\ Namangan state University, Namangan, Uzbeksiatn.}
 \email {z.b.x.k@mail.ru}

 \address{U.\ A.\ Rozikov \\ Institute of mathematics, 29, Do'rmon Yo'li str., 100125,
Tashkent, Uzbekistan.} \email {rozikovu@yandex.ru}
\begin{abstract} Recently, R.Varro introduced a gonosomal algebra of 
the temperature-dependent sex determination system which is controlled 
by three temperature ranges. In this paper we study dynamical systems which are 
given by quadratic evolution operators of the gonosomal algebras of sex-liked populations. 
We show that this evolution operator can be reduced to an evolution operator of free population.
Then using behavior of the free population
 we describe the set of limit points for trajectories of several evolution operators  
of the sex-linked populations.      
\end{abstract}

\subjclass[2010] {17D92; 17D99; 60J27}

\keywords{temperature-dependent sex determination;  fixed point;
time; limit point.} \maketitle
\section{Introduction}

It is known\footnote{see
https://en.wikipedia.org/wiki/Sex-determination$_-$system} that
some species of reptiles, including alligators, some turtles, and
the tuatara, sex is determined by the temperature at which the egg
is incubated during a temperature-sensitive period. In reptiles
(snakes, crocodiles, turtles, lizards) there are two types of sex
determination, either a genotypic determination controlled
according to the species by XY- or ZW-system, either a
determination depending on the incubation temperature of eggs.

Following \cite{V} we recall that temperature-dependent sex
determination is controlled by three temperature ranges, the eggs
subject to feminizing temperatures (resp. masculinising) give rise
to 100 percentage  or a majority of females and those subject to
transition temperatures provide 50 percentage females and 50
percentage males.

In \cite{V} the following algebraic model is considered: let $A$
be a linear space with basis $(e_{i},\tilde{e_{i}})_{1\leq i\leq
n}$ where $e_{i}$
 (resp. $\tilde{e_{i}})$ are female (resp. male) genetic types
present in a population and subject to temperature-dependent sex
determination. Denote by $\tau_{1},$  $\tau_{2}$  and $ \tau_{3}$
the probability that eggs are incubated at feminizing
temperatures, masculinizing temperatures and transition
temperatures respectively, thus
$$\tau_{1}, \tau_{2}, \tau_{3}\geq 0, \ \ \tau_{1}+\tau_{2}+\tau_{3}=1.$$
 For each $r=1, 2$ ($r=1$ for feminizing temperatures, $r=2$ for
masculinizing temperatures) denote by $\mu_{r}$ and
$\tilde{\mu_{r}}$ respectively the proportions of females and
males arising from eggs placed in the environment $r$, thus we
have
$$\mu_{r} + \tilde{\mu_{r}} = 1, \ \ \mu_{1} \geq
\tilde{\mu_{1}} \geq 0, \ \ 0 \leq \mu_{2} \leq \tilde{\mu_{2}}.$$
For all $1 \leq i, p \leq n$ denote by $\theta_{ipk}$ the egg
proportion of $e_{k}$ type in the laying of a female $e_{i}$
crossed with a male $\tilde{e_{p}}$, thus
\begin{equation}\label{the}
\theta_{ipk}\geq 0, \ \   \sum_{k=1}^{n}\theta_{ipk}=1.
\end{equation}
 Then the space $A$ equipped with the
following algebra structure:
$$e_{i} e_{j} = \tilde{e_{p}} \tilde{e_{q}} = 0,$$
\begin{equation}\label{2.1}e_{i}
\tilde{e_{p}}=(\mu_{1}\tau_{1}+\mu_{2}\tau_{2}+\frac{1}{2}\tau_{3})\sum_{k=1}^{n}\theta_{ipk}e_{k}+
(\tilde{\mu_{1}}\tau_{1}+\tilde{\mu_{2}}\tau_{2}+\frac{1}{2}\tau_{3})\sum_{k=1}^{n}\theta_{ipk}\tilde{e_{k}}.
\end{equation}
This algebra $A$ is called a gonosomal algebra (see \cite{V}) and
the product $e_{i}\tilde{e_{p}}$ gives the genetic distribution of
progeny of a female $e_{i}$ with a male $\tilde{e_{p}}$.

Denote
$$a=\mu_{1}\tau_{1}+\mu_{2}\tau_{2}+\frac{1}{2}\tau_{3}, \ \ b=\tilde{\mu_{1}}\tau_{1}+\tilde{\mu_{2}}\tau_{2}+\frac{1}{2}\tau_{3}.$$
Note that $a\geq 0$, $b\geq 0$ and $a+b=1$.

Consider the following set\footnote{This set is a subset of
$(2n-1)$-dimensional simplex.}
$$S=\left\{z=(x_{1},...,x_{n},y_{1},...,y_{n})\in{R}^{2n},\right.$$ $$
\left.x_{i}\geq0, y_{j}\geq0, \sum_{i=1}^{n}x_{i}\neq0,
\sum_{j=1}^{n}y_{j}\neq0,
\sum_{i=1}^{n}x_{i}+\sum_{j=1}^{n}y_{j}=1\right\}.$$

We call the partition into types hereditary if for each possible
state $z\in S$ describing the current generation, the state $z'\in
S$ is uniquely defined describing the next generation. This means
that the association $z\rightarrow z'$ defines a map
$W:S\rightarrow S$ called the evolution operator \cite{L}.

For any point $z^{(0)}\in S$ the sequence $z^{(t)}=W(z^{(t-1)}),
t=1,2,...$ is called the trajectory of $z^{(0)}$. Denote by $\omega(z^{(0)})$ the set of
limit points of the trajectory. Since $\{z^{(n)}\}\subset S$
and $S$ is compact, it follows that $\omega(z^{(0)})\ne
\emptyset.$ Obviously, if $\omega(z^{(0)})$ consists of a single point,
then the trajectory converges, and $\omega(z^{(0)})$ is a fixed point.
However, looking ahead, we remark that convergence of the
trajectories is not the typical case for the dynamical systems.

If $z'=(x_{1}',...,x_{n}',y_{1}',...,y_{n}')$ is a state of the
system gens in the next generation then by the rule (\ref{2.1}) we
get the evolution operator $W_a:S\rightarrow S$ defined by

\begin{equation}\label{2.2}W_a:\left\{
\begin{array}{ll}
    x_{k}'=\frac{a \sum_{i=1}^{n} \sum_{p=1}^{n} \theta_{ipk}x_{i}y_{p}}{ \sum_{i=1}^{n} x_{i} \sum_{p=1}^{n} y_{p}},  \\[3mm]
    y_{k}'=\frac{(1-a) \sum_{i=1}^{n} \sum_{p=1}^{n} \theta_{ipk}x_{i}y_{p}} { \sum_{i=1}^{n} x_{i} \sum_{p=1}^{n} y_{p}},
\end{array}\right.
\end{equation} where $k=1,...,n$.

In this paper our goal is to study dynamical systems generated by operator (\ref{2.2}). 
 
\section{Dynamics systems generated by the operator (\ref{2.2})}
For $a\in (0,1)$ we denote
$$S_a\equiv S^{2n-2}_a=\left\{z=(x_{1},...,x_{n},y_{1},...,y_{n})\in S: \ \sum_{i=1}^{n}x_{i}=a, \sum_{j=1}^{n}y_{j}=1-a\right\}.$$
The following lemma is useful
\begin{lemma}\label{l1}
\begin{itemize} For any fixed $a\in (0,1)$ we have
\item[1.] for any $z=(x,y)\in S$ the following holds
$$z'=(x',y')=W_a(z)\in S_a, \ \ \mbox{i.e.}, \ \ W_a(S)\subset S_a;$$
\item[2.] the set $S_a$ is invariant with respect
to $W_a$, i.e., $W_a(S_a)\subset S_a$.
\end{itemize}
\end{lemma}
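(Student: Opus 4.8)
The plan is to prove statement 1 by a direct computation of the coordinate sums of $z'=W_a(z)$, and then to observe that statement 2 is an immediate consequence, since $S_a\subset S$.

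First I would check that $W_a$ is well defined on all of $S$: for $z=(x,y)\in S$ all coordinates are nonnegative and $\sum_{i=1}^{n}x_i\ne 0$, $\sum_{p=1}^{n}y_p\ne 0$, so both factors of the denominator $\left(\sum_{i=1}^{n}x_i\right)\left(\sum_{p=1}^{n}y_p\right)$ in (\ref{2.2}) are strictly positive. Since $\theta_{ipk}\ge 0$ by (\ref{the}), $a\ge 0$, and $x_i,y_p\ge 0$, every numerator in (\ref{2.2}) is nonnegative, hence $x_k'\ge 0$ and $y_k'\ge 0$ for all $k$.

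Next I would compute the two partial sums. Summing the first line of (\ref{2.2}) over $k$ and interchanging the order of summation gives
$$\sum_{k=1}^{n}x_k'=\frac{a\sum_{i=1}^{n}\sum_{p=1}^{n}\left(\sum_{k=1}^{n}\theta_{ipk}\right)x_iy_p}{\sum_{i=1}^{n}x_i\sum_{p=1}^{n}y_p}.$$
Using $\sum_{k=1}^{n}\theta_{ipk}=1$ from (\ref{the}) and the identity $\sum_{i,p}x_iy_p=\left(\sum_{i}x_i\right)\left(\sum_{p}y_p\right)$, the right-hand side collapses to $a$. The identical computation for the $y$-coordinates, now carrying the factor $1-a$ instead of $a$, yields $\sum_{k=1}^{n}y_k'=1-a$. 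In particular the total mass equals $a+(1-a)=1$, and since $a\in(0,1)$ we get $\sum_{k}x_k'=a>0$ and $\sum_{k}y_k'=1-a>0$, so the nondegeneracy conditions in the definition of $S$ are satisfied. Therefore $z'\in S$ and, moreover, $z'\in S_a$; this is exactly statement 1, i.e. $W_a(S)\subset S_a$.

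Finally, since $S_a\subseteq S$, applying statement 1 to points of $S_a$ gives $W_a(S_a)\subseteq W_a(S)\subseteq S_a$, which is statement 2. I do not expect any genuine obstacle here: the computation is routine, and the only point worth a moment's care is verifying that the image lands in $S$ rather than merely in the ambient simplex — this is where the strict inequalities $0<a<1$ are used, to ensure $\sum_{k}x_k'\ne 0$ and $\sum_{k}y_k'\ne 0$.
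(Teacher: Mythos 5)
Your proof is correct and follows essentially the same route as the paper, which simply cites the key equality $\sum_{k}x_k'=a$, $\sum_{k}y_k'=1-a$ as ``easily checked''; you have supplied the routine verification (interchanging sums, using $\sum_k\theta_{ipk}=1$ and the product identity) together with the nonnegativity and nondegeneracy checks that the paper leaves implicit.
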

\begin{proof}
The proof follows from the following easily checked equality
\begin{equation}\label{ab}
\sum_{k=1}^nx_k'=a, \ \ \ \sum_{k=1}^ny_k'=1-a.
\end{equation}
\end{proof}
For each fixed $a\in (0,1)$,  by this Lemma \ref{l1}, the
investigation of the sequence $z^{(t)}=W_a(z^{(t-1)})$,
$t=1,2,...$, for each point $z^{(0)}\in S$ is reduced to the case
$z^{(0)}\in S_a$. Therefore we are interested to the following
dynamical system:

$$z^{(0)}\in S_a, z^{(1)}=W_a(z^{(0)}), z^{(2)}=W_a(z^{(1)}), \dots$$
the main problem is to study the limit
\begin{equation}\label{lim}
\lim_{m\to\infty}z^{(m)}=\lim_{m\to\infty}W_a^m(z^{(0)}).
\end{equation}
 The restriction on $S_a$ of the operator $W_a$, denoted simply by $W$, has the form
 \begin{equation}\label{2.2a}
 W:\left\{%
\begin{array}{ll}
    x_{k}'=(1-a)^{-1} \sum_{i=1}^{n} \sum_{p=1}^{n} \theta_{ipk}x_{i}y_{p},  \\[3mm]
    y_{k}'=a^{-1} \sum_{i=1}^{n} \sum_{p=1}^{n} \theta_{ipk}x_{i}y_{p},  \\
\end{array}%
\right.\end{equation} where $k=1,...,n$.

Denote $\beta=(1-a)/a$. A point $z\in S_a$ is called a fixed point
of $W$ if $W(z)=z$.
\begin{lemma}\label{l2}
If $z=(x,y)\in S_a$ is a fixed point of $W$ then
$$y_k=\beta x_k, \ \ k=1,\dots, n.$$
\end{lemma}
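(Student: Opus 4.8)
The plan is to exploit the fact that the two families of components $x'_k$ and $y'_k$ in the restricted operator \eqref{2.2a} are proportional to the same quantity $\sum_{i,p}\theta_{ipk}x_iy_p$. Concretely, from \eqref{2.2a} we read off immediately that for every $k$,
\[
\frac{x'_k}{1-a}=\frac{y'_k}{a}=\sum_{i=1}^n\sum_{p=1}^n\theta_{ipk}x_iy_p,
\]
which gives $a\,x'_k=(1-a)\,y'_k$, i.e. $y'_k=\beta x'_k$ with $\beta=(1-a)/a$, for an arbitrary point $z=(x,y)\in S_a$ (here one only needs $a\in(0,1)$ so the division is legitimate). This relation holds for the image of \emph{any} point, not merely fixed points.

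Then I would specialize to a fixed point: if $z=(x,y)$ satisfies $W(z)=z$, then $x'_k=x_k$ and $y'_k=y_k$ for all $k$, so substituting into the identity above yields $y_k=\beta x_k$ for $k=1,\dots,n$, which is exactly the claim.

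I do not expect any genuine obstacle here; the statement is essentially an algebraic observation about the structure of \eqref{2.2a}. The only point worth a word of care is making sure the denominators $a$ and $1-a$ are nonzero, which is guaranteed by the standing assumption $a\in(0,1)$, and noting that the coordinates $x'_k,y'_k$ appearing in \eqref{2.2a} are the components of a point already lying in $S_a$ by Lemma \ref{l1}, so the computation is consistent. One could optionally remark, as a sanity check, that summing $y_k=\beta x_k$ over $k$ gives $\sum_k y_k=\beta\sum_k x_k=\frac{1-a}{a}\cdot a=1-a$, in agreement with the definition of $S_a$.
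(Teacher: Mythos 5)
Your proof is correct and is exactly the ``straightforward'' computation the paper has in mind: the two lines of \eqref{2.2a} share the common factor $\sum_{i,p}\theta_{ipk}x_iy_p$, so $y'_k=\beta x'_k$ identically, and specializing to a fixed point gives the claim. No issues.
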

\begin{proof} Straightforward.
\end{proof}
By this lemma the problem of finding fixed points of $W_a$ is
reduced to solution of the following system:
\begin{equation}\label{2.2b}
    x_{k}=a^{-1} \sum_{i=1}^{n} \sum_{p=1}^{n} \theta_{ipk}x_{i}x_{p}, \ \ k=1,\dots,n.
\end{equation}
\begin{lemma}\label{l2n}
If $z^{(0)}=(x_1^{(0)},\dots, x_n^{(0)}; y_1^{(0)},\dots,
y_n^{(0)})\in S_a$ is an initial point and
$z^{(m)}=(x_1^{(m)},\dots, x_n^{(m)}; y_1^{(m)},\dots,
y_n^{(m)})=W^m_a(z^{(0)})$ then
$$y^{(m)}_k=\beta x^{(m)}_k, \ \ \mbox{for any} \ \ k=1,\dots, n, \ \ m\geq 1.$$
\end{lemma}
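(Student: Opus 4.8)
The plan is to prove the statement by induction on $m$, using Lemma~\ref{l2n} conceptually but really re-deriving the key relation from the explicit form of $W$ in \eqref{2.2a}. Actually, observe first that Lemma~\ref{l2n} is precisely the statement we wish to prove, so I will treat this as the proof of Lemma~\ref{l2n} itself.

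\begin{proof}
We argue by induction on $m$. Observe that for any $z=(x,y)\in S_a$, formula \eqref{2.2a} gives
\begin{equation}\label{key-ratio}
x_k'=(1-a)^{-1}\sum_{i=1}^n\sum_{p=1}^n\theta_{ipk}x_iy_p,\qquad y_k'=a^{-1}\sum_{i=1}^n\sum_{p=1}^n\theta_{ipk}x_iy_p,
\end{equation}
and since the two sums on the right are identical, dividing one equation by the other yields $y_k'/x_k'=(1-a)/a=\beta$, hence $y_k'=\beta x_k'$ for every $k=1,\dots,n$. This already proves the base case $m=1$: the point $z^{(1)}=W_a(z^{(0)})$ satisfies $y_k^{(1)}=\beta x_k^{(1)}$ for all $k$.

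For the inductive step, suppose $z^{(m)}\in S_a$ (which is guaranteed by part~2 of Lemma~\ref{l1}) satisfies $y_k^{(m)}=\beta x_k^{(m)}$ for all $k$. Applying the same observation \eqref{key-ratio} to the point $z^{(m)}$ in place of $z$, we get that $z^{(m+1)}=W_a(z^{(m)})$ satisfies $y_k^{(m+1)}=\beta x_k^{(m+1)}$ for all $k$. This completes the induction.
\end{proof}

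The only point requiring care is the observation that the numerators in the two lines of \eqref{2.2a} coincide exactly (they are the same double sum $\sum_{i,p}\theta_{ipk}x_iy_p$), so that the ratio $y_k'/x_k'$ is the constant $a^{-1}/(1-a)^{-1}=(1-a)/a=\beta$, independent of $k$ and of $z$, as long as $x_k'\neq 0$; in the degenerate case $x_k'=0$ the relation $y_k'=\beta x_k'$ still holds since then $y_k'=0$ as well. There is no real obstacle here — the statement is essentially immediate from the structure of $W$ once one notices that only the common prefactors $a^{-1}$ and $(1-a)^{-1}$ distinguish the $y$-coordinates from the $x$-coordinates. Note also that we do not even need the hypothesis $z^{(0)}\in S_a$ for $m\geq 1$; by part~1 of Lemma~\ref{l1} every image lands in $S_a$ anyway, and the relation $y_k=\beta x_k$ is forced on the very first application of the operator.
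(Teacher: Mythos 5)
Your argument is correct and is essentially the paper's own proof: the authors simply display the iterated equations \eqref{2.m} and note that the two lines share the identical double sum, differing only by the prefactors $(1-a)^{-1}$ and $a^{-1}$, which forces $y_k^{(m)}=\beta x_k^{(m)}$. Your induction wrapper is harmless but unnecessary, as you yourself observe at the end --- the relation holds after a single application of $W_a$ regardless of the starting point.
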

\begin{proof} Follows from the following equality
\begin{equation}\label{2.m}\left\{%
\begin{array}{ll}
    x_{k}^{(m)}=(1-a)^{-1} \sum_{i=1}^{n} \sum_{p=1}^{n} \theta_{ipk}x^{(m-1)}_{i}y^{(m-1)}_{p},  \\[3mm]
    y_{k}^{(m)}=a^{-1} \sum_{i=1}^{n} \sum_{p=1}^{n} \theta_{ipk}x^{(m-1)}_{i}y^{(m-1)}_{p},  \\
\end{array}%
\right.  k=1,\dots, n, \ \ m\geq 1. \end{equation}
\end{proof}
By this lemma to investigate $z^{(m)}$ it suffices to study
$x^{(m)}=(x_1^{(m)}, \dots, x_n^{(m)})$ given by $x^{(m)}=\tilde V(x^{(m-1)})$, $m\geq 1$, where
\begin{equation}\label{uz}
  \tilde V: \,  x_{k}'=a^{-1} \sum_{i=1}^{n} \sum_{p=1}^{n} \theta_{ipk}x_{i}x_{p}, \ \ k=1, \dots, n.
\end{equation}

Introduce new variables $u_i=x_i/a$, $i=1,2,\dots,m$ then
from (\ref{uz}) we obtain the operator $V$
given by
\begin{equation}\label{kso}
V:\ \  u'_k=\sum_{i=1}^{n} \sum_{p=1}^{n} \theta_{ipk}u_{i}u_{p}, \ \ k=1,\dots,n.
\end{equation}
where $\theta_{ipk}$ satisfies (\ref{the}).

Note that dynamical systems generated by the operator (\ref{kso})
were studied in many papers (see \cite{L}-\cite{RS}, and \cite{GMR} for a review,
\cite{JLM} and references therein for recent results).

Thus we have the following
\begin{cor}\label{c1} The dynamical system given by (\ref{2.2a}) is equivalent to the dynamical system
given by (\ref{kso}).
\end{cor}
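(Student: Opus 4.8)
The plan is to chain together the reductions already established in Lemmas \ref{l1}, \ref{l2n} and the change of variables leading to \eqref{kso}. First I would invoke Lemma \ref{l1} to justify that the dynamical system generated by $W_a$ on $S$ is, after one step, confined to the invariant slice $S_a$, so that studying \eqref{2.2} is the same as studying its restriction \eqref{2.2a}. Next, by Lemma \ref{l2n}, every iterate $z^{(m)}=W_a^m(z^{(0)})$ with $z^{(0)}\in S_a$ satisfies $y_k^{(m)}=\beta x_k^{(m)}$, so the orbit of the $2n$-dimensional system is completely determined by the orbit of the $n$-dimensional subsystem $x^{(m)}=\tilde V(x^{(m-1)})$ defined in \eqref{uz}; conversely any orbit of $\tilde V$ lifts to an orbit of $W$ on $S_a$ by setting $y_k=\beta x_k$. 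Hence the map $z=(x,y)\mapsto x$ is a conjugacy between $(S_a,W)$ and the system generated by $\tilde V$ on the relevant domain.

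Then I would make the linear change of coordinates $u_i=x_i/a$ explicit: this is an invertible affine map (a homothety) carrying the domain $\{x_i\ge 0,\ \sum_i x_i=a\}$ of $\tilde V$ onto the standard simplex $\{u_i\ge 0,\ \sum_i u_i=1\}$, and a direct substitution into \eqref{uz} — using $\theta_{ipk}\ge 0$, $\sum_k\theta_{ipk}=1$ so that the image again lies in the simplex — yields precisely the operator $V$ in \eqref{kso}. Since a homothety is a homeomorphism that intertwines $\tilde V$ and $V$, the two dynamical systems are topologically conjugate, and in particular have the same fixed points, periodic orbits, and limit sets up to this conjugacy. Composing the two conjugacies (the projection $z\mapsto x$ and the scaling $x\mapsto u$) gives the desired equivalence between \eqref{2.2a} and \eqref{kso}.

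I do not anticipate a serious obstacle here: every ingredient is already in place and the only content is bookkeeping — checking that the change of variables is a bijection between the correct invariant sets and that it conjugates the two quadratic operators. The one point worth stating carefully is the precise sense of \emph{equivalent}: I would phrase it as the existence of a homeomorphism $h:S_a\to S^{n-1}$ (the standard simplex) with $h\circ W=V\circ h$, so that trajectories, $\omega$-limit sets and fixed points correspond under $h$; this is what makes the later analysis of \eqref{2.2} reducible to the known results on \eqref{kso} cited after its introduction.
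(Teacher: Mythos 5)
Your proposal is correct and follows essentially the same route as the paper, which states the corollary as an immediate summary of Lemma \ref{l2n} (reduction of the $2n$-dimensional orbit to the $x$-coordinates via $y_k^{(m)}=\beta x_k^{(m)}$) followed by the rescaling $u_i=x_i/a$ that turns \eqref{uz} into \eqref{kso}. Your added care in phrasing the equivalence as a topological conjugacy $h\circ W=V\circ h$ only makes explicit what the paper leaves implicit.
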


To illustrate the above-mentioned results for investigation of (\ref{2.2a}) we consider the following
three classes of operators:

\begin{itemize}
\item[C1.] Assume $\theta_{ipk}=0$ if $k\notin \{i,j\}$, and $\theta_{ipk}\ne {1\over 2}$, for each $i,p,k=1,\dots,n$.

\item[C2.] Divide the set $E=\{1,2,\dots,n\}$ into three parts:
$$\{1\}, F=\{2,\dots,m\}, M=\{m+1,\dots,n\},$$
where $2\leq m\leq n-1$.

 Now define coefficients $\theta_{ijk}$ as
follows
\begin{equation}\label{thet}
\theta_{ijk}=\left\{\begin{array}{lll}
1, \ \ {\rm if} \ \ k=1, i,j\in F\cup \{1\} \ \ {\rm or} \ \ i,j\in
M\cup \{1\};\\
0, \ \ {\rm if} \ \ k\ne 1, i,j\in F\cup \{1\} \ \ {\rm or} \ \
i,j\in M\cup \{1\};\\
\geq 0, \ \ {\rm if } \ \ i\in F, j\in M, \forall k.
\end{array}\right.
\end{equation}

\item[C3.] Now we give an example of operator (\ref{2.2a}) which has periodic orbits.
Consider $n=3$ and coefficients as the following
 \begin{equation}\label{ex}
 \theta_{113}=\theta_{123}=\theta_{213}=\theta_{132}=\theta_{213}=\theta_{223}=\theta_{312}=\theta_{231}=1, \ \ \theta_{331}=c, \theta_{332}=d,
 \end{equation}
 where $c+d=1$ and the remaining $\theta_{ijk}=0$.
 The corresponding operator (\ref{kso}) has the form
 $$
 \left\{\begin{array}{lll} x'_1=cx^2_3+ 2x_2x_3\\
x'_2=dx^2_3+ 2x_1x_3\\
x'_3= (x_1+x_2)^2.\\
\end{array}\right.
$$
\end{itemize}

Denote
$${\rm int} S_a=\left\{z=(x_{1},...,x_{n},y_{1},...,y_{n})\in S_a: \prod_{i=1}^nx_{i}y_i\ne 0\right\}.$$
$$\partial S_a=S_a\setminus {\rm int}S_a.$$

\begin{thm}\label{t1} If condition {\rm C1} is satisfied then
\begin{itemize}
\item[1)] If $z^{(0)}\in {\rm int}S^{m-1}$
is not a fixed point, then
$\omega(z^{(0)})\subset \partial S_a$.

\item[2)] The set $\omega(z^{(0)})$ either consists of a single point or
is infinite.

\item[3)] If the operator (\ref{2.2a}) has an isolated fixed point $z^*\in {\rm int}
S_a$ then for any initial point
$z^{(0)}\in {\rm
int}S_a\setminus \{z^*\}$ the trajectory $\{z^{(n)}\}$ does not
converge.\\

If condition {\rm C2} is satisfied then

\item[4)]  Let $\theta_{ijk}$ be as in (\ref{thet}) then operator (\ref{2.2a})
has a unique fixed point
$((a,0,...,0),(1-a,0,...,0))$. For any $z^0\in S_a$, the
trajectory $\{z^{(n)}\}$ tends to this fixed point exponentially
rapidly.\\

In case {\rm C3} we have

\item[5)] If (\ref{ex}) satisfied then corresponding operator
 (\ref{2.2a}) has unique fixed point
 $$z^*=(ax^*_1, ax^*_2, ax^*_3, (1-a)x^*_1, (1-a)x^*_2, (1-a)x^*_3),$$ where
$$x^*_1={(7-3\sqrt{5})c+4\sqrt{5}-8\over 2(4-\sqrt{5})};\ \
x^*_2={(3\sqrt{5}-7)c+\sqrt{5}-1\over 2(4-\sqrt{5})};\ \
x^*_3={3-\sqrt{5}\over 2}.$$
There is unique cyclic theory $\{(ac,ad,0, 1-a,0,0),(a,0,0,1-a,0,0)\}.$ For any
initial point $z^{(0)}\in S_a$ the $\omega$-limit set has the following form
$$\omega(z^{(0)})=
\left\{\begin{array}{ll}
\{(ac,ad,0, 1-a,0,0),(a,0,0,1-a,0,0)\}, \ \ \mbox {if} \ \ z^{(0)}_3\ne ax^*_3,\\
\{z^*\}, \ \ \ \ \mbox {if} \ \ z^{(0)}_3= ax^*_3.
\end{array}\right.
$$
 \end{itemize}
\end{thm}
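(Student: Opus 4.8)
The plan is to route every part of the theorem through Corollary~\ref{c1}. Note first that in \eqref{2.2a} the two right-hand sides differ only by the factor $a^{-1}/(1-a)^{-1}=\beta$, so $y_k'=\beta x_k'$ for \emph{every} input; hence $W$ carries $S$ into the invariant slice $\{z\in S_a:y=\beta x\}$, which is a copy of the standard $(n-1)$-simplex in the coordinates $u=x/a$. Thus for $m\ge1$ one has $z^{(m)}=\bigl(au^{(m)},(1-a)u^{(m)}\bigr)$ with $u^{(m)}=V^{\,m-1}(u^{(1)})$, $V$ as in \eqref{kso}, and $\omega(z^{(0)})$ is the image of the $V$-limit set of $u^{(1)}$ under $v\mapsto(av,(1-a)v)$. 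So it suffices to analyse $V$ on the simplex in each of C1, C2, C3.

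For C1: the hypotheses turn $V$ into a Volterra quadratic stochastic operator $u_k'=u_k\bigl(1+\sum_j a_{kj}u_j\bigr)$ with $a_{kj}$ skew-symmetric and (since $\theta_{ipk}\ne\tfrac12$) with no vanishing off-diagonal entry. I would invoke the standard theory (\cite{GMR}); its central tool is the geometric-mean Lyapunov function $\ell(u)=\prod_k u_k^{p_k}$ attached to an interior fixed point $p$, which satisfies $\ell(V(u))\le\ell(u)$ by weighted AM--GM, with equality only at $p$. Strict monotonicity forces every interior orbit other than $p$ to leave every compact subset of $\operatorname{int}S$, which is item~1); if $V$ has no interior fixed point the conclusion is immediate. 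Item~2) follows from the fact that a Volterra operator has no periodic orbit of period $\ge2$, so a finite $\omega$-limit set collapses to a single fixed point. Item~3) then combines 1), 2) with the non-attractiveness of the fixed point $z^*$ (all eigenvalues of the linearisation at an interior fixed point of a Volterra operator have modulus $\ge1$) and with the fact that no boundary point attracts a nearby interior orbit when an interior fixed point exists.

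For C2: group the $V$-coordinates as $U_1=u_1$, $U_F=\sum_{i\in F}u_i$, $U_M=\sum_{i\in M}u_i$. By \eqref{thet} a coordinate with index $\ne1$ is produced only by cross terms $u_iu_p$ with $i\in F$, $p\in M$, and since $\sum_k\theta_{ipk}=1$, summing those equations gives $U_F'+U_M'\le2U_FU_M$. With $\varepsilon:=1-U_1\in[0,1]$ this is $\varepsilon'\le2U_FU_M\le\tfrac12(U_F+U_M)^2=\tfrac12\varepsilon^2$, so $\varepsilon^{(m)}\to0$ super-exponentially and $u^{(m)}\to(1,0,\dots,0)$ (every other coordinate being $\le\varepsilon^{(m)}$); translating back, $z^{(m)}\to((a,0,\dots,0),(1-a,0,\dots,0))$ at the stated rate. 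The same inequality at a fixed point forces $\varepsilon=0$, so that fixed point is unique.

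For C3 the structural point is that, for $n=3$, the third coordinate of $V$ is autonomous: $u_3'=(u_1+u_2)^2=(1-u_3)^2=:g(u_3)$ (equivalently $x_3^{(m+1)}=a^{-1}(a-x_3^{(m)})^2$ for $m\ge1$). The map $g$ on $[0,1]$ is a decreasing homeomorphism with unique fixed point $t^*=(3-\sqrt5)/2$, repelling because $|g'(t^*)|=\sqrt5-1>1$, and with a super-attracting $2$-cycle $\{0,1\}$; since $g^2$ is increasing with fixed points $0<t^*<1$ (attracting at $0,1$, repelling at $t^*$), every $t\ne t^*$ is drawn to $\{0,1\}$ while $t=t^*$ stays put. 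When $u_3\equiv t^*$ the residual dynamics is the affine map $u_1'=c(t^*)^2+2t^*u_2=-2t^*u_1+\mathrm{const}$ (using $u_1+u_2=1-t^*$), a contraction since $2t^*=3-\sqrt5<1$; its fixed point, read off from $V(u^*)=u^*$, gives the displayed $x_1^*,x_2^*,x_3^*$, whence $z^{(m)}\to z^*$. Otherwise, tracking $u_1,u_2$ as $u_3$ approaches $\{0,1\}$ shows $u^{(m)}$ tends to the period-two orbit of $V$, and pulling it back by $x=au$, $y=\beta x$ yields the asserted cyclic trajectory. Since $g^{-1}(t^*)=\{t^*\}$ and, on the slice $y=\beta x$ (where the orbit lies for $m\ge1$), $u_3^{(1)}=g(u_3^{(0)})$, the two regimes are separated precisely by $z^{(0)}_3=ax_3^*$. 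I expect the real difficulty to be item~3) and its supporting items~1)--2): converting the qualitative Volterra facts into proofs under the present hypotheses — in particular, why neither the isolated interior fixed point nor any boundary point attracts a nearby interior orbit, and exactly where the non-degeneracy $\theta_{ipk}\ne\tfrac12$ is needed — while for C3 the finicky points are pinning down the exact basins of the two attractors and handling the anomalous first iterate.
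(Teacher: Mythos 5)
Your reduction is exactly the paper's: both proofs funnel everything through Corollary~\ref{c1} (equivalently Lemma~\ref{l2n}), observing that after one step the orbit lies on the slice $y=\beta x$ and is governed by the quadratic stochastic operator $V$ of \eqref{kso} on the simplex. The difference is what happens next. The paper's proof is three lines of citations: parts 1)--3) to Theorem 2.4 of \cite{GMR} (the Volterra QSO theory), part 4) to Theorem 2.24 of \cite{GMR}/\cite{RJ}, and part 5) to Theorem 2 of \cite{RZ}. You instead unpack those citations. For C2 your argument ($\varepsilon'\le 2U_FU_M\le\tfrac12\varepsilon^2\le\tfrac12\varepsilon$, forcing $\varepsilon^{(m)}\to0$ exponentially and $\varepsilon=0$ at any fixed point) is a complete, self-contained and arguably cleaner proof than the citation. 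For C3 your analysis of the autonomous factor $u_3'=(1-u_3)^2$, its repelling fixed point $t^*=(3-\sqrt5)/2$, the super-attracting $2$-cycle $\{0,1\}$ of $g$, and the contraction $u_1'=\mathrm{const}-2t^*u_1$ on the fiber $u_3\equiv t^*$ is precisely the content of \cite{RZ}, correctly reproduced, including the fixed-point formulas. Note that the $2$-cycle you obtain, $\{(ac,ad,0,(1-a)c,(1-a)d,0),(0,0,a,0,0,1-a)\}$, is the correct one and does not literally match the tuple printed in the theorem, which appears to contain typos (it violates Lemma~\ref{l2n}); likewise the dichotomy $z^{(0)}_3\ne ax_3^*$ is only the right separatrix for initial points already on the slice, a looseness present in the paper's statement itself and which you handle the same way.

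The only place your proposal is genuinely thinner than a proof is C1, and there you are at parity with the paper, which also just cites \cite{GMR}. Your Lyapunov sketch has the loose ends you yourself flag: the function $\prod_k u_k^{p_k}$ and its monotonicity require an interior fixed point and a specific choice of exponents; item 1) of the theorem must also be argued when no interior fixed point exists (in \cite{GMR} this is done via the linear functions $u\mapsto 1+\sum_j a_{kj}u_j$ rather than the product); and the skew-symmetry $a_{kj}=-a_{jk}$ uses the symmetrization $\theta_{ipk}\mapsto\tfrac12(\theta_{ipk}+\theta_{pik})$, after which the hypothesis $\theta_{ipk}\ne\tfrac12$ as literally stated in C1 no longer guarantees $a_{kj}\ne0$ (another defect of the paper's formulation, not of your argument). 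If you intend C1 to rest on the cited theorem, as the paper does, your proof is complete; if you intend to prove it from scratch, that part needs to be filled in.
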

\begin{proof} 1)-3) follow from Theorem 2.4 of \cite{GMR} by Corollary \ref{c1}.

Part 4) follows from Theorem 2.24 of \cite{GMR} (see also \cite{RJ}).

Part 5) is the consequence of Corollary \ref{c1} and Theorem 2 of \cite{RZ}.
 \end{proof}
\begin{rk} It follows from Theorem \ref{t1} that the
operator (\ref{2.2a}) has a trajectory which
converges; or does not converge with a finite set of limit points;
or does not converge with an infinite set of limit points. Note that
these are all possible cases which one expects for a given sequence of real numbers.
Therefore, we can choose parameters of the dynamical system generated by the operator (\ref{2.2a})
to have as rich behavior as needed.
\end{rk}

\section{Full analysis in two dimensional case}

In general it is difficult to solve the system (\ref{2.2b}). Let
us solve it for $n=2$.

{\it Case $n=2$}: In this case using $x_2=a-x_1$ we can reduce the
first equation to a quadratic equation with four parameters, which
has the following two solutions
\begin{equation}\label{qe}\begin{array}{ll}
x_{1,1}={a\over 2}\cdot{2\theta_3-\theta_2+1+\sqrt{(\theta_2-1)^2+4\theta_3(1-\theta_1)}
\over \theta_1+\theta_3-\theta_2},\\[3mm]
x_{1,2}={a\over 2}\cdot{2\theta_3-\theta_2+1-\sqrt{(\theta_2-1)^2+4\theta_3(1-\theta_1)}
\over \theta_1+\theta_3-\theta_2},
\end{array}
\end{equation}
where
$$\theta_1=\theta_{111},\ \ \theta_2=\theta_{121}+\theta_{211}, \ \ \theta_3=\theta_{221}.$$

 Thus if the parameters satisfy the following condition
\begin{equation}\label{con}
(\theta_2-1)^2+4\theta_3(1-\theta_1)\geq 0
\end{equation}
 then the fixed points are
$$z_{j}=(x_{1,j},x_{2,j}, y_{1,j}, y_{2,j}), j=1,2,$$
where $x_{2,i}=a-x_{1,i}$ and $y_{i,j}=\beta x_{i,j}$.
Below we find conditions on parameters of the operator (\ref{uz}) to guarantee  $z_{j}\in S_a$.

For $n=2$, using $x_{1}+x_{2}=a$, from (\ref{uz}) we get
\begin{equation}\label{T}
T:x_{1}'=a^{-1}(\theta_1+\theta_3-\theta_2)x_{1}^{2}+(\theta_2-2\theta_3)x_{1}+\theta_3a
\end{equation}
Note that (\ref{T}) maps $S_{a}^{1}=[0, \,a]$ to itself, i.e., $T:S_{a}^{1}\mapsto S_{a}^{1}$.

\begin{lemma}\label{zafar}
\begin{itemize}
\item[\emph{a}.] Uniqueness of fixed point:

\item[1)] If $\theta_{1}\in[0;1),
\theta_{2}\in[0;1], \theta_{3}=0$ then (\ref{T}) has a unique fixed point $x_{1}=0$;

\item[2)] If $\theta_{1}=0, \theta_{2}=1, \theta_{3}=1$ then (\ref{T})
has a unique fixed point $x_{1}=\frac{a}{2}$;

\item[3)] If $\theta_{1}=1, \theta_{2}\in[1;2], \theta_{3}\in(0;1]$ then the fixed point of (\ref{T}) is $x_{1}=a$;

\item[4)] If $\theta_{1}\in[0;1), \theta_{2}\in[0;2], \theta_{3}\in(0;1]$
then the fixed point of (\ref{T}) is
 $$x_{1}={a\over
2}\cdot{2\theta_3-\theta_2+1-\sqrt{(\theta_2-1)^2+4\theta_3(1-\theta_1)}
\over \theta_1+\theta_3-\theta_2}.$$

\item[\emph{b}.] Two fixed points:

\item[1)] If $\theta_{1}=1, \theta_{2}\in[0;1), \theta_{3}\in(0;1]$ then
the mapping (\ref{T}) has two fixed points
$$x_{1,1}=a, \ \ x_{1,2}=\frac{\theta_{3}a}{\theta_{3}-\theta_{2}+1};$$

\item[2)] If $\theta_{1}=1, \theta_{2}\in[0;1)\cup(1;2), \theta_{3}=0$ then
(\ref{T}) has two fixed points
$$x_{1,1}=0, \ \ x_{1,2}=a;$$

\item[3)] If $\theta_{1}\in[0;1], \theta_{2}\in(1;2], \theta_{3}=0$ then
mapping (\ref{T}) has fixed points
$$x_{1,1}=0, \, x_{1,2}=\frac{(1-\theta_{2})a}{\theta_{1}-\theta_{2}}.$$

\item[\emph{c}.] If $\theta_1=\theta_2=1, \theta_3=0$ then the set  $S_{a}^1$
is the set of fixed points of (\ref{T}).
\end{itemize}
\end{lemma}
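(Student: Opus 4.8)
The plan is to analyze the one-dimensional quadratic map
\[
T(x_{1})=a^{-1}(\theta_{1}+\theta_{3}-\theta_{2})x_{1}^{2}+(\theta_{2}-2\theta_{3})x_{1}+\theta_{3}a
\]
on the interval $S_{a}^{1}=[0,a]$ by a complete case split on the leading coefficient $\kappa:=\theta_{1}+\theta_{3}-\theta_{2}$. A fixed point satisfies $T(x_{1})=x_{1}$, i.e. $\kappa x_{1}^{2}+a(\theta_{2}-2\theta_{3}-1)x_{1}+\theta_{3}a^{2}=0$. When $\kappa=0$ this is linear (or, if additionally the linear coefficient vanishes, an identity), and when $\kappa\neq 0$ it is the quadratic whose roots are exactly $x_{1,1},x_{1,2}$ from \eqref{qe}. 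So the whole lemma reduces to (i) identifying, in each listed parameter regime, whether $\kappa=0$ or $\kappa\neq 0$, (ii) reading off the roots, and (iii) discarding any root lying outside $[0,a]$.

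First I would dispatch part (c): if $\theta_{1}=\theta_{2}=1$ and $\theta_{3}=0$ then $\kappa=0$ and the linear coefficient $\theta_{2}-2\theta_{3}-1=0$ as well, so $T(x_{1})\equiv x_{1}$ identically and every point of $S_{a}^{1}$ is fixed. Next, part (a): in case 1) $\theta_{3}=0$ forces the constant term $\theta_{3}a^{2}=0$, so $x_{1}=0$ is a root; the other root of $\kappa x_{1}^{2}+a(\theta_{2}-1)x_{1}=0$ is $x_{1}=a(1-\theta_{2})/\kappa$ with $\kappa=\theta_{1}-\theta_{2}$, and I must check that under $\theta_{1}\in[0,1),\theta_{2}\in[0,1]$ this either coincides with $0$ or falls outside $(0,a]$ — indeed if $\theta_{2}<1$ then $\theta_{1}-\theta_{2}<1-\theta_{2}$ could be negative or the ratio could exceed $1$; the sign bookkeeping here is the first place care is needed. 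Case 2) is immediate: $\kappa=0$, linear coefficient $=1-2=-1\neq 0$, constant $=a$, giving $x_{1}=a/2$. In case 3), $\theta_{1}=1$ makes $x_{1}=a$ a root (since $T(a)=a^{-1}\kappa a^{2}+(\theta_{2}-2\theta_{3})a+\theta_{3}a=a(\theta_{1}+\theta_{3}-\theta_{2}+\theta_{2}-2\theta_{3}+\theta_{3})=a\theta_{1}=a$); the other root is $\theta_{3}a/(\theta_{1}+\theta_{3}-\theta_{2})$, which under $\theta_{2}\in[1,2]$ satisfies $\theta_{1}+\theta_{3}-\theta_{2}=1+\theta_{3}-\theta_{2}\leq\theta_{3}$, hence the root is $\geq a$, forcing it to equal $a$ and so the fixed point is unique at $a$. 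Case 4) is the generic branch: under $\theta_{1}<1$ and $\theta_{3}>0$ the discriminant $(\theta_{2}-1)^{2}+4\theta_{3}(1-\theta_{1})>0$ is strictly positive, so both roots in \eqref{qe} are real and distinct; I then argue that the ``$+$'' root lies outside $[0,a]$ (by comparing it with the known bound that $T$ maps $[0,a]$ into itself, or by a direct estimate using $\theta_{2}\le 2$), leaving the ``$-$'' root as the unique fixed point in $S_{a}^{1}$.

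For part (b), each item has $\kappa\neq 0$ and the task is to produce both roots and verify both lie in $[0,a]$. In b1) $\theta_{1}=1$ gives $x_{1}=a$ as one root and $\theta_{3}a/(\theta_{3}-\theta_{2}+1)$ as the other; under $\theta_{2}<1$ the denominator $\theta_{3}-\theta_{2}+1>\theta_{3}\ge 0$, so this second root lies in $(0,a)$, and distinctness from $a$ follows since $\theta_{3}<\theta_{3}-\theta_{2}+1$. In b2) $\theta_{3}=0$ kills the constant term so $0$ is a root, $\theta_{1}=1$ gives $a$ as a root, and $\kappa=1-\theta_{2}\neq 0$ since $\theta_{2}\neq 1$; these are the only two roots of the (now) genuine quadratic, and both are clearly in $[0,a]$. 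In b3) again $\theta_{3}=0$ yields $0$ as a root and the quadratic $(\theta_{1}-\theta_{2})x_{1}^{2}+a(\theta_{2}-1)x_{1}=0$ gives the second root $x_{1}=a(1-\theta_{2})/(\theta_{1}-\theta_{2})$; since $\theta_{2}>1$ both numerator $1-\theta_{2}<0$ and denominator $\theta_{1}-\theta_{2}<0$ (as $\theta_{1}\le 1<\theta_{2}$), so the ratio is positive, and the inequality $1-\theta_{2}\ge\theta_{1}-\theta_{2}$ (i.e. $\theta_{1}\le 1$) shows it is $\le a$, hence in $[0,a]$.

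The main obstacle will be the membership-in-$[0,a]$ verification and the correct handling of the boundary/degenerate parameter values where $\kappa=0$ or where a candidate root coincides with an endpoint — there the ``two fixed points'' of the general formula \eqref{qe} collapse to one, and one must notice this to state uniqueness correctly (as in a1, a3). These are all elementary sign and monotonicity estimates on rational functions of $\theta_{1},\theta_{2},\theta_{3}$ over the given intervals, with no conceptual difficulty, but they must be done carefully case by case to match each claimed formula; I would organize them in a single table of $(\kappa,\ \text{roots},\ \text{which roots survive})$ indexed by the nine regimes.
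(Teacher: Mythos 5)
Your proposal is correct and takes essentially the same route as the paper, whose entire proof is a one-line appeal to a ``detailed analysis'' of the fixed-point quadratic followed by checking that the roots (\ref{qe}) lie in $S_a^1$; your explicit case split on the leading coefficient $\kappa=\theta_1+\theta_3-\theta_2$ (separating the degenerate linear and identity cases from the genuine quadratic) supplies exactly the details the paper omits. The only slip is in case a3, where dividing by $\theta_1+\theta_3-\theta_2$ presumes that quantity is positive --- when it is zero or negative the second root disappears or is negative --- but in every subcase the conclusion that $x_1=a$ is the unique fixed point in $[0,a]$ still holds.
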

\begin{proof} The proof consists detailed analysis of the quadratic equation
\begin{equation}\label{Tp}
x=a^{-1}(\theta_1+\theta_3-\theta_2)x^{2}+(\theta_2-2\theta_3)x+\theta_3a.
\end{equation}
This equation has solutions (\ref{qe}). Carefully checking of $x_{1,1}, x_{1,2}\in  S_{a}^{1}$
completes the proof.
\end{proof}

For the mapping (\ref{T}) define type of fixed points.
\begin{defn}
Suppose $x_{0}$ is a fixed point for $T$. Then $x_{0}$ is an
attracting fixed point if $|T'(x_{0})|<1$. The point $x_{0}$ is
a repelling fixed point if $|T'(x_{0})|>1$. Finally, if
$|T'(x_{0})|=1$, the fixed point is called neutral or
saddle \cite{D}.
\end{defn}
The following results are very simple to prove:
\begin{lemma}\label{lz}
\begin{itemize}
\item[1)] The type of the unique fixed points for (\ref{T}) are as follows:
$$x_{1}=
\left\{\begin{array}{ll}
    0 \ \hbox{is attracting} \ \ \hbox{if}\ \ \theta_{1}\in[0;1), \theta_{2}\in[0;1), \theta_{3}=0, \\[2mm]
    0 \ \ \hbox{is saddle} \ \ \hbox{if}\ \ \theta_{1}\in[0;1), \theta_{2}=1, \theta_{3}=0, \\[2mm]
    \frac{a}{2} \ \ \hbox{is saddle} \ \ \hbox{if}\ \ \theta_{1}=0, \theta_{2}=1, \theta_{3}=1,  \\
    a \ \ \hbox{is attracting} \ \ \hbox{if}\ \ \theta_{1}=1, \theta_{2}\in(1;2), \theta_{3}\in(0;1], \\[2mm]
    a \ \ \hbox{is saddle} \ \ \hbox{if}\ \ \theta_{1}=1, \theta_{2}\in\{1;2\}, \theta_{3}\in(0;1], \\[2mm]
    x^{*} \ \ \hbox{is attracting} \ \ \hbox{if}\ \ (\theta_2-1)^2+4\theta_3(1-\theta_1)<4, \\[2mm]
    x^{*} \ \ \hbox{is reppeler } \ \ \hbox{if}\ \ (\theta_2-1)^2+4\theta_3(1-\theta_1)>4,  \\[2mm]
    x^{*} \ \ \hbox{is saddle } \ \ \hbox{if}\ \ (\theta_2-1)^2+4\theta_3(1-\theta_1)=4, \\[2mm]
\end{array}\right.$$
where $$x^{*}={a\over
2}\cdot{2\theta_3-\theta_2+1-\sqrt{(\theta_2-1)^2+4\theta_3(1-\theta_1)}
\over \theta_1+\theta_3-\theta_2}.$$
\item[2)] The type of two fixed point of (\ref{T})
$$x_{1,1}=
\left\{\begin{array}{ll}
    a \ \ \hbox{is repelling} \ \ \hbox{if}\ \ \theta_{1}=1, \theta_{2}\in[0;1), \theta_{3}\in(0;1], \\[2mm]
    0 \ \ \hbox{is attarcting } \ \ \hbox{if}\ \ \theta_{1}=1, \theta_{2}\in[0;1), \theta_{3}=0, \\[2mm]
    0 \ \ \hbox{is repelling } \ \ \hbox{if}\ \ \theta_{1}=1, \theta_{2}\in(1;2), \theta_{3}=0,  \\
    \ \ \ \ \ \ \ \ \ \ \ \ \ \ \ \ \ \hbox{or} \ \ \theta_{1}\in[0;1], \theta_{2}\in(1;2], \theta_{3}=0, \\[2mm]
   \end{array}\right.$$
$$x_{1,2}=
\left\{\begin{array}{ll}
    \frac{\theta_{3}a}{\theta_{3}-\theta_{2}+1} \ \ \hbox{is attracting} \ \ \hbox{if}\ \ \theta_{1}=1, \theta_{2}\in[0;1), \theta_{3}\in(0;1], \\[2mm]
    a \ \ \hbox{is repelling} \ \ \hbox{if}\ \ \theta_{1}=1, \theta_{2}\in[0;1), \theta_{3}=0, \\[2mm]
    a \ \ \hbox{is attarcting} \ \ \hbox{if}\ \ \theta_{1}=1, \theta_{2}\in(1;2), \theta_{3}=0,  \\
    \frac{(1-\theta_{2})a}{\theta_{1}-\theta_{2}} \ \ \hbox{is attracting} \ \ \hbox{if}\ \ \theta_{1}\in[0;1], \theta_{2}\in(1;2], \theta_{3}=0. \\[2mm]
   \end{array}\right.$$
\end{itemize}
\end{lemma}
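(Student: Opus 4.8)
The plan is to reduce everything to evaluating the multiplier $T'$ at each fixed point exhibited in Lemma~\ref{zafar} and comparing $|T'|$ with $1$. From (\ref{T}),
\[
T'(x)=\frac{2(\theta_1+\theta_3-\theta_2)}{a}\,x+(\theta_2-2\theta_3),
\]
so the degenerate fixed points are handled by direct substitution: $T'(0)=\theta_2-2\theta_3$, and $T'(a)=2(\theta_1+\theta_3-\theta_2)+(\theta_2-2\theta_3)=2\theta_1-\theta_2$, which equals $2-\theta_2$ in the cases a.3), b.1), b.2) where $\theta_1=1$. In case a.2), where $\theta_1+\theta_3-\theta_2=0$ and $\theta_2=\theta_3=1$, one reads off $T'(a/2)=\theta_2-2\theta_3=-1$, giving the saddle line there.

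For the interior fixed point I would avoid substituting the radical and instead use the factorization
\[
T(x)-x=\frac{\theta_1+\theta_3-\theta_2}{a}\,(x-x_{1,1})(x-x_{1,2}),
\]
valid whenever $\theta_1+\theta_3-\theta_2\ne0$. Differentiating at $x_{1,j}$ gives $T'(x_{1,j})-1=\frac{\theta_1+\theta_3-\theta_2}{a}\,(x_{1,j}-x_{1,j'})$ for $\{j,j'\}=\{1,2\}$, and by (\ref{qe}) the root difference is $x_{1,1}-x_{1,2}=\frac{a}{\theta_1+\theta_3-\theta_2}\sqrt{D}$, where $D:=(\theta_2-1)^2+4\theta_3(1-\theta_1)$. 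Hence
\[
T'(x_{1,1})=1+\sqrt{D},\qquad T'(x_{1,2})=1-\sqrt{D}.
\]
Since $x^{*}=x_{1,2}$, we have $|T'(x^{*})|=|1-\sqrt{D}|$, which is $<1$, $=1$, or $>1$ exactly when $D<4$, $D=4$, or $D>4$; on the parameter set $\theta_1<1,\ \theta_3>0$ where $x^{*}$ lives one has $D>0$, so the trichotomy is exhaustive, and this yields the last block of part~1). The same two identities cover part~2): in regimes b.1)--b.3) one has $(1-\theta_1)\theta_3=0$, hence $D=(\theta_2-1)^2$ and the two multipliers are $2-\theta_2$ and $\theta_2$, the matching of each root with its multiplier being read off from (\ref{qe}).

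The remainder is bookkeeping: in each regime of Lemma~\ref{zafar} insert the prescribed $\theta_1,\theta_2,\theta_3$ into $T'(0)=\theta_2-2\theta_3$, $T'(a)=2\theta_1-\theta_2$, or into $\{1-\sqrt{D},\,1+\sqrt{D}\}$, and compare the absolute value with $1$. For instance, for $\theta_1=1,\ \theta_2\in[0;1),\ \theta_3\in(0;1]$ one gets $T'(a)=2-\theta_2\in(1;2]$ (repelling) and $T'\!\big(\tfrac{\theta_3 a}{\theta_3-\theta_2+1}\big)=1-\sqrt{D}=\theta_2\in[0;1)$ (attracting), exactly as claimed; every other row is a one-line check of the same type. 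The only point demanding care is the degenerate locus $\theta_1+\theta_3-\theta_2=0$ (there $T$ is affine, and even constant when in addition $\theta_2=2\theta_3$), where the factorization is unavailable and $T'$ has to be computed directly; this is also where the borderline parameter values $\theta_2\in\{1,2\}$ and $\theta_3\in\{0,1\}$ must be treated separately, since these are exactly the borderline cases in the comparison of $|T'|$ with $1$. Beyond this there is no genuine obstacle — the statement is a finite disjunction of elementary verifications, and the real work lies in organizing the cases so that no boundary value of a parameter is overlooked.
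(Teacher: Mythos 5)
Your approach is sound and, in fact, more than the paper supplies: the paper states this lemma with only the remark that the results ``are very simple to prove'' and gives no proof at all. Your reduction is the right one --- compute $T'(x)=2a^{-1}(\theta_1+\theta_3-\theta_2)x+(\theta_2-2\theta_3)$, so that $T'(0)=\theta_2-2\theta_3$ and $T'(a)=2\theta_1-\theta_2$, and for the non-degenerate quadratic use the factorization $T(x)-x=a^{-1}(\theta_1+\theta_3-\theta_2)(x-x_{1,1})(x-x_{1,2})$ to get the multipliers $T'(x_{1,1})=1+\sqrt{D}$ and $T'(x_{1,2})=1-\sqrt{D}$ with $D=(\theta_2-1)^2+4\theta_3(1-\theta_1)$. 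This identity is the key observation: it makes the trichotomy for $x^*$ immediate and turns part 2) into the single computation $\{2-\theta_2,\theta_2\}$ for the pair of multipliers, since $D=(\theta_2-1)^2$ whenever $(1-\theta_1)\theta_3=0$. Your handling of the degenerate locus $\theta_1+\theta_3-\theta_2=0$ (where $T$ is affine and $T'\equiv\theta_2-2\theta_3$) is also correct and necessary, e.g.\ for the row $\theta_1=0,\theta_2=\theta_3=1$ where $T'\equiv-1$.

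One caveat: your closing assertion that ``every other row is a one-line check of the same type'' and comes out ``exactly as claimed'' is too quick for the row $\theta_1=1$, $\theta_2\in\{1;2\}$, $\theta_3\in(0;1]$. Your own formula gives $T'(a)=2\theta_1-\theta_2=2-\theta_2$, which equals $1$ at $\theta_2=1$ (saddle, as claimed) but equals $0$ at $\theta_2=2$, so there $a$ is attracting (indeed super-attracting), not a saddle as the lemma asserts. This is a defect of the statement rather than of your method --- your calculation is what exposes it --- but a complete writeup should either record this correction or restrict that row to $\theta_2=1$, rather than claim that all entries verify.
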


\begin{pro}\label{pro1}
\begin{itemize}
 Let $x^{(0)}\in S_{a}^{1}$ be an initial point then
\item[1)]$$\lim_{m\longrightarrow \infty}T^{(m)}(x_{0})=\lim_{m\longrightarrow \infty}x^{(m)}=
\left\{\begin{array}{ll}
    0, \ \ \hbox{if} \ \ \theta_{1}\in[0;1), \theta_{2}\in[0;1], \theta_{3}=0, \\[2mm]
    a, \ \ \hbox{if} \ \ \theta_{1}=1, \theta_{2}\in[1;2], \theta_{3}\in(0;1],  \\
    x^{*}, \ \ \hbox{if} \ \ \theta_{1}\in[0;1), \theta_{2}\in[0;2],    \theta_{3}\in(0;1], \\
    \frac{\theta_{3}a}{\theta_{3}-\theta_{2}+1}, \ \ \hbox{if} \ \ \theta_{1}=1, \theta_{2}\in[0;1), \theta_{3}\in(0;1], \\[2mm]
    0, \ \ \hbox{if} \ \ \theta_{1}=1, \theta_{2}\in[0;1), \theta_{3}=0,  \\
    a, \ \ \hbox{if} \ \ \theta_{1}=1, \theta_{2}\in(1;2), \theta_{3}=0,  \\
    \frac{(1-\theta_{2})a}{\theta_{1}-\theta_{2}}, \ \ \hbox{if} \ \ \theta_{1}\in[0;1], \theta_{2}\in(1;2], \theta_{3}=0,
\end{array}\right.$$
\item[2)]$$\lim_{k\longrightarrow \infty}T^{(2k)}(x_{0})=\lim_{k\longrightarrow \infty}x^{(2k)}=x^{(0)}, \ \ \hbox{if} \ \ \theta_{1}=0, \theta_{2}=1,
\theta_{3}=1.$$
\item[3)] $$\lim_{k\longrightarrow \infty}T^{(2k+1)}(x_{0})=\lim_{k\longrightarrow \infty}x^{(2k+1)}=a-x^{(0)}, \ \ \hbox{if} \ \ \theta_{1}=0, \theta_{2}=1, \theta_{3}=1.$$
\end{itemize}
\end{pro}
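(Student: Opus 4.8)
The statement is entirely about the one--variable quadratic map $T\colon S_a^1=[0,a]\to[0,a]$ of~(\ref{T}), so the plan is to carry out an elementary one--dimensional (cobweb) analysis, organised along the parameter regimes already separated in Lemma~\ref{zafar} and using the stability data of Lemma~\ref{lz} to decide which fixed point is selected.

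I would dispose of parts~2) and~3) first, as they are immediate. When $\theta_1=0$, $\theta_2=1$, $\theta_3=1$ the leading coefficient $\theta_1+\theta_3-\theta_2$ of~(\ref{T}) vanishes, while $\theta_2-2\theta_3=-1$ and $\theta_3a=a$, so $T$ degenerates to the affine involution $T(x)=a-x$. Hence $T^{2}=\mathrm{id}$, which gives $x^{(2k)}=x^{(0)}$ and $x^{(2k+1)}=a-x^{(0)}$ for all $k$, i.e.\ exactly~2) and~3). (This regime is of course excluded from~1), since there $\lim_m x^{(m)}$ fails to exist whenever $x^{(0)}\neq a/2$.)

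For part~1) I would treat the items of the list one by one, each time by the same three steps. First, from the factorisation $T(x)-x=a^{-1}(\theta_1+\theta_3-\theta_2)(x-\xi_1)(x-\xi_2)$, with $\xi_1,\xi_2$ the fixed points of~(\ref{qe}) (or, if $\theta_1+\theta_3=\theta_2$, the affine fixed point), I read off the sign of $T(x)-x$ on each subinterval of $[0,a]$ bounded by the fixed points; in the items with $\theta_3=0$ this alone gives $T(x)\le x$ (respectively $T(x)\ge x$) on all of $[0,a]$. Second, from the position of the vertex $x_v=(2\theta_3-\theta_2)a\big/\big(2(\theta_1+\theta_3-\theta_2)\big)$ relative to $[0,a]$ I decide whether $T$ is increasing, decreasing or unimodal there, and I isolate an invariant subinterval around the relevant fixed point on which $T$ (or, when $T$ reverses orientation, $T^{2}$) is monotone. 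Third, monotone convergence then forces every orbit to converge to a fixed point, and Lemma~\ref{lz} identifies which one: a repelling fixed point here always lies at an endpoint $0$ or $a$ and so is never the limit of an orbit starting in the interior, whence the orbit must approach the attracting fixed point named in the statement; in the single--fixed--point items the limit is forced outright. Substituting the explicit expressions from~(\ref{qe}) and Lemma~\ref{zafar} finishes each item.

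The step I expect to be the main obstacle is upgrading the \emph{local} stability recorded in Lemma~\ref{lz} to the \emph{global} convergence asserted here, precisely in the regimes where $T$ is not monotone on the whole of $[0,a]$. There ``eventually monotone'' is not automatic, and one must rule out attracting periodic orbits before concluding that the attracting fixed point captures the entire interval. My way around this would be to use that a quadratic map has strictly negative Schwarzian derivative, hence admits at most one attracting cycle, whose multiplier is governed by $|T'(x^{*})|\le 1$ from Lemma~\ref{lz}; combined with the invariant--subinterval description from the second step, this yields convergence to $x^{*}$ for every starting point in the relevant interval. (It should be kept in mind that this uses $|T'(x^{*})|\le 1$ in an essential way: in the sub--regimes where the interior fixed point is instead repelling, $T$ acquires an attracting $2$--cycle --- for instance $\{0,a\}$ when $\theta_1=0$, $\theta_3=1$ --- so the conclusions of~1) are to be read with the stability restrictions of Lemma~\ref{lz} in force.)
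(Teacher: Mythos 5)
Your argument is correct, and it is in substance a rigorous expansion of what the paper leaves implicit: the published proof of Proposition \ref{pro1} consists of the single sentence ``Follows from Lemma \ref{lz} taking into account the graph of the mapping (\ref{T})''. Your treatment of parts 2) and 3) via the degeneration of (\ref{T}) to the involution $x\mapsto a-x$ is exactly what the paper intends, and your three-step scheme for part 1) (sign of $T(x)-x$ between the roots (\ref{qe}), location of the vertex to get an invariant interval of monotonicity, then monotone convergence plus the classification of Lemma \ref{lz}) is the honest version of ``look at the graph''. What you add that the paper does not have is the negative-Schwarzian argument to exclude attracting cycles in the non-monotone regimes; strictly speaking Singer's theorem bounds the number of attracting cycles by the number of critical points plus the two endpoints rather than by one, so you should supplement it with the observation that here the endpoints are themselves fixed or fall into the orbit of the critical point, but this is a refinement, not a gap. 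Most valuably, your closing caveat identifies a real defect in the statement itself: the third line of part 1) as written covers parameters with $(\theta_2-1)^2+4\theta_3(1-\theta_1)>4$ (e.g.\ $\theta_1=0$, $\theta_2=0$, $\theta_3=1$, where $T(x)=a^{-1}(x-a)^2$ has the superattracting $2$-cycle $\{0,a\}$), in which case the orbit does not converge to $x^*$; the paper's one-line proof silently ignores this, whereas your restriction to the sub-regime where $x^*$ is attracting is the correct reading.
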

\begin{proof}
Follows from Lemma \ref{lz} taking into account the graph of the mapping (\ref{T}).
\end{proof}
\begin{thm}
\begin{itemize}
 Let $z^{0}=(x^{(0)}_{1},x^{(0)}_{2},y^{(0)}_{1},y^{(0)}_{2})\in S_{a}^{2}$ be an initial point then
\item[1)]$$\lim_{m\longrightarrow \infty}W^{(m)}(z_{0})=$$
$$
\left\{\begin{array}{ccc}
    (0,a,0,1-a),& \hbox{if} & \theta_{1}\in[0;1), \theta_{2}\in[0;1], \theta_{3}=0, \\[2mm]
    (a,0,1-a,0), & \hbox{if} & \theta_{1}=1, \theta_{2}\in[1;2], \theta_{3}\in(0;1],  \\
    (x^{*},a-x^{*},\beta x^{*},\beta (a-x^{*})), & \hbox{if} & \theta_{1}\in[0;1), \theta_{2}\in[0;2],
    \theta_{3}\in(0;1],\\
    (\frac{\theta_{3}a}{\theta_{3}-\theta_{2}+1},\frac{(1-\theta_{2})a}{\theta_{3}-\theta_{2}+1},
    \frac{\theta_{3}(1-a)}{\theta_{3}-\theta_{2}+1},\frac{(1-\theta_{2})(1-a)}{\theta_{3}-\theta_{2}+1}),& \hbox{if} & \theta_{1}=1, \theta_{2}\in[0;1), \theta_{3}\in(0;1], \\[2mm]
    (0,a,0,1-a), & \hbox{if} & \theta_{1}=1, \theta_{2}\in[0;1), \theta_{3}=0,  \\
    (a,0,1-a,0), & \hbox{if} & \theta_{1}=1, \theta_{2}\in(1;2), \theta_{3}=0,  \\
    (\frac{(1-\theta_{2})a}{\theta_{1}-\theta_{2}},\frac{(\theta_{1}-1)a}
    {\theta_{1}-\theta_{2}},\frac{(1-\theta_{2})(1-a)}{\theta_{1}-\theta_{2}},
    \frac{(\theta_{1}-1)(1-a)}{\theta_{1}-\theta_{2}}), & \hbox{if} & \theta_{1}\in[0;1], \theta_{2}\in(1;2], \theta_{3}=0,
\end{array}\right.$$
\item[2)]$$\lim_{k\longrightarrow \infty}W^{(2k)}(z^{(0)})=(x^{(0)}_{1},a-x^{(0)}_{1},\beta x^{(0)}_{1},\beta (a-x^{(0)}_{1})), \ \ \hbox{if} \ \ \theta_{1}=0, \theta_{2}=1,
\theta_{3}=1,$$
$$\lim_{k\longrightarrow
\infty}W^{(2k+1)}(z^{(0)})=(a-x^{(0)}_{1},x^{(0)}_{1},\beta
(a-x^{(0)}_{1}),\beta x^{(0)}_{1}), \ \ \hbox{if} \ \ \theta_{1}=0,
\theta_{2}=1, \theta_{3}=1.$$
\end{itemize}
\end{thm}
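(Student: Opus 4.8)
The plan is to deduce this theorem directly from Proposition~\ref{pro1} together with Lemma~\ref{l2n}, exactly as the restriction of $W_a$ to $S_a^2$ was reduced to the one-dimensional map $T$ in~(\ref{T}). First I would recall that by Lemma~\ref{l2n} any trajectory $z^{(m)}=W_a^m(z^{(0)})$ with $z^{(0)}\in S_a^2$ satisfies $y_k^{(m)}=\beta x_k^{(m)}$ for all $m\ge 1$, and that by the relation $x_1^{(m)}+x_2^{(m)}=a$ the whole state is determined by the single coordinate $x_1^{(m)}$. Moreover, comparing~(\ref{2.2a}) for $n=2$ with~(\ref{T}), one checks that $x_1^{(m)}=T(x_1^{(m-1)})$, i.e. $x_1^{(m)}=T^{(m)}(x_1^{(0)})$, so that
\[
W^{(m)}(z^{(0)})=\bigl(T^{(m)}(x_1^{(0)}),\,a-T^{(m)}(x_1^{(0)}),\,\beta T^{(m)}(x_1^{(0)}),\,\beta(a-T^{(m)}(x_1^{(0)}))\bigr)
\]
for every $m\ge 1$.

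The next step is purely a matter of substituting the limits from Proposition~\ref{pro1} into this formula. For part 1), in each of the seven parameter regimes Proposition~\ref{pro1} gives $\lim_{m\to\infty}T^{(m)}(x_1^{(0)})=\ell$ for the corresponding value $\ell\in\{0,\,a,\,x^*,\,\theta_3 a/(\theta_3-\theta_2+1),\,(1-\theta_2)a/(\theta_1-\theta_2)\}$; since $W^{(m)}$ depends continuously on $x_1^{(m)}$, the limit of $W^{(m)}(z^{(0)})$ is obtained by plugging $\ell$ into the quadruple above. One only has to simplify the second and fourth entries: $a-\theta_3 a/(\theta_3-\theta_2+1)=(1-\theta_2)a/(\theta_3-\theta_2+1)$ and $a-(1-\theta_2)a/(\theta_1-\theta_2)=(\theta_1-1)a/(\theta_1-\theta_2)$, which matches the stated expressions. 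For part 2), Proposition~\ref{pro1}(2)--(3) says that when $\theta_1=0,\theta_2=1,\theta_3=1$ the map $T$ acts as $x\mapsto a-x$ in the limit, so $T^{(2k)}(x_1^{(0)})\to x_1^{(0)}$ and $T^{(2k+1)}(x_1^{(0)})\to a-x_1^{(0)}$; substituting these two values into the quadruple above yields the two asserted subsequential limits.

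There is no real obstacle here: the content is already in Lemma~\ref{l2n} (which supplies $y_k=\beta x_k$ and reduces the dynamics to one coordinate) and in Proposition~\ref{pro1} (which supplies the one-dimensional asymptotics). The only thing to be careful about is the bookkeeping of the algebraic identities for the complementary coordinates $a-x$ and $\beta(a-x)$, and checking that $\beta=(1-a)/a$ converts $\beta x^*$, $\beta\theta_3 a/(\theta_3-\theta_2+1)$, etc., into the $(1-a)$-weighted forms written in the statement. Accordingly I would write the proof in one short paragraph: invoke Lemma~\ref{l2n} to pass to $x_1^{(m)}=T^{(m)}(x_1^{(0)})$, invoke Proposition~\ref{pro1} for the limit of $T^{(m)}(x_1^{(0)})$ in each case, and conclude by continuity, recording the two simplifications $a-\tfrac{\theta_3 a}{\theta_3-\theta_2+1}=\tfrac{(1-\theta_2)a}{\theta_3-\theta_2+1}$ and $a-\tfrac{(1-\theta_2)a}{\theta_1-\theta_2}=\tfrac{(\theta_1-1)a}{\theta_1-\theta_2}$.
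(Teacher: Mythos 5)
Your argument is correct and follows essentially the same route as the paper's own (one-line) proof: reduce to the one-dimensional map $T$ of (\ref{T}) via the proportionality $y_k^{(m)}=\beta x_k^{(m)}$ from Lemma \ref{l2n}, then substitute the limits supplied by Proposition \ref{pro1}. The only caveat --- present in the paper's statement as well --- is that Lemma \ref{l2n} gives $y^{(m)}=\beta x^{(m)}$ only for $m\ge 1$, so the identity $x_1^{(m)}=T^{(m)}(x_1^{(0)})$, and hence the appearance of $x_1^{(0)}$ in the period-two case 2), strictly requires $y^{(0)}=\beta x^{(0)}$; for a general $z^{(0)}\in S_a^2$ the $T$-iteration should be started from $x_1^{(1)}$.
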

\begin{proof} Follows from Lemma \ref{l1}, Lemma \ref{l2} and Proposition \ref{pro1}.
\end{proof}

\section{An example of $n$-dimensional case}

Now we consider the following constraint on heredity coefficients
(\ref{the})
$$\theta_{ipk}=
\left\{\begin{array}{ll}
    c_{ik}, \ \ \hbox{if} \ \ p=j, \\[2mm]
    1, \ \ \hbox{if} \ \ p\neq j, k=l, \\
    0, \ \ \hbox{if} \ \ p\neq j, k\neq l,
\end{array}\right. \ \ j, \, l=1,\dots, n.$$
Then operator defined by (\ref{uz}) has the following form
\begin{equation}\label{4.a}
U_{a}:\left\{%
\begin{array}{ll}
    x_{k}'=a^{-1}\sum_{i=1}^{n}c_{ik}x_{i}x_{j}, \ \ k=1,\dots, n, \ \ k\neq l  \\[3mm]
    x_{l}'=a^{-1}(\sum_{i=1}^{n}c_{il}x_{i}x_{j}+\sum_{i=1}^{n}\sum_{j\neq p=1}^{n}x_{i}x_{p}).  \\[3mm]
    \end{array}%
\right.\end{equation}

We will consider the following case
$$c_{ik}=
\left\{\begin{array}{ll}
    1, \ \ \hbox{} \ \ i=k, \\[2mm]
    0, \ \ \hbox{} \ \ i\neq k,  \\
\end{array}\right.\eqno$$
then the operator defined by (\ref{4.a}) has the following form
\begin{equation}\label{4.b}
U:\left\{%
\begin{array}{ll}
    x_{k}'=a^{-1}x_{k}x_{j}, \ \ k=1,\dots, n, \ \ k\neq l  \\[3mm]
    x_{l}'=a^{-1}(x_{l}x_{j}+\sum_{i=1}^{n}\sum_{j\neq p=1}^{n}x_{i}x_{p}). \\[3mm]
    \end{array}%
\right.\end{equation}

Let us find all fixed points of $U$ given by (\ref{4.b}), i.e. we
solve the following system of equations
\begin{equation} \label{fix}
\left\{%
\begin{array}{ll}
    x_{k}=a^{-1}x_{k}x_{j}, k=1,\dots, n, k\neq l  \\[3mm]
    x_{l}=a^{-1}(x_{l}x_{j}+\sum_{i=1}^{n}\sum_{j\neq p=1}^{n}x_{i}x_{p}), \\[3mm]
    \end{array}%
\right.\end{equation}
From the first equation of the system (\ref{fix}) by $k=j$ and $k\neq l$ we get
 $x_{j,1}=0, x_{j,2}=a$. Consequently, since $\sum_{i=1}^{n}x_{k}=a$ we obtain $x_{l,1}=a, x_{l,2}=0$.
 Thus we have proved the following

\begin{pro}\begin{itemize}
\item[1)] If $j=l$ then the operator  (\ref{4.b}) has unique fixed point
$${\bf x}=(\underbrace{0,...,0,a}_{j},\underbrace{0,...0}_{n-j});$$
\item[2)] If $j \neq l$ then (\ref{4.b}) has two fixed points
$$
{\bf x}_{1,l}=(\underbrace{0,...,0,a}_{l},\underbrace{0,...0}_{n-l}), \ \
{\bf x}_{2,j}=(\underbrace{0,...,0,a}_{j},\underbrace{0,...0}_{n-j}).$$
\end{itemize}
\end{pro}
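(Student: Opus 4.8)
The plan is to solve the fixed-point system \eqref{fix} directly, making essential use of the constraint $\sum_{i=1}^{n}x_{i}=a$ that defines the domain of $U$. First I would rewrite each equation with $k\neq l$ in the form $x_{k}(a-x_{j})=0$; since every coordinate is nonnegative, this produces a dichotomy: either $x_{j}=a$, or else $x_{k}=0$ for every $k\neq l$.

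Next I would simplify the single remaining equation, the one for $x_{l}$, which is the only one carrying the extra double sum. Using $\sum_{i=1}^{n}x_{i}=a$ and $\sum_{p\neq j}x_{p}=a-x_{j}$, that double sum factors,
$$\sum_{i=1}^{n}\sum_{p\neq j}x_{i}x_{p}=\Big(\sum_{i=1}^{n}x_{i}\Big)\Big(\sum_{p\neq j}x_{p}\Big)=a(a-x_{j}),$$
so the $l$-th equation of \eqref{fix} reduces to $ax_{l}=x_{l}x_{j}+a(a-x_{j})$, which rearranges to $(a-x_{j})(a-x_{l})=0$. Hence that one equation already forces $x_{j}=a$ or $x_{l}=a$.

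Combining the two dichotomies settles both parts. In the branch $x_{j}=a$, nonnegativity together with $\sum_{i}x_{i}=a$ forces all remaining coordinates to vanish, so the only candidate is ${\bf x}_{2,j}$ ($a$ in the $j$-th slot), and a direct substitution checks that it solves all of \eqref{fix}. In the branch $x_{j}\neq a$, the first dichotomy gives $x_{k}=0$ for all $k\neq l$ and then the $l$-th equation gives $x_{l}=a$, so the only candidate is ${\bf x}_{1,l}$ ($a$ in the $l$-th slot); this is compatible with $x_{j}\neq a$ exactly when $j\neq l$. Therefore, if $j=l$ the second branch is empty (it would demand $x_{j}=x_{l}=a$) and ${\bf x}_{2,j}$ is the unique fixed point, proving 1); and if $j\neq l$ both ${\bf x}_{2,j}$ and ${\bf x}_{1,l}$ occur, are distinct, and are checked to satisfy \eqref{fix} (for ${\bf x}_{1,l}$ one has $x_{j}=0$ and the $l$-th equation reads $a=a^{-1}(0+a\cdot a)$), proving 2).

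I do not expect a genuine obstacle: the argument is elementary algebra. The two points that need care are (i) tracking which indices the range ``$k\neq l$'' omits — the equation for $k=j$ is present precisely when $j\neq l$ — and (ii) remembering to verify that each candidate vector satisfies the \emph{full} system \eqref{fix}, including the equation that contains the $\sum\sum$ term, and not merely the simple equations $x_{k}'=a^{-1}x_{k}x_{j}$.
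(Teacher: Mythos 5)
Your proof is correct and follows essentially the same route as the paper: a direct elementary analysis of the fixed-point system (\ref{fix}) using the normalization $\sum_{i}x_{i}=a$. In fact your version is more complete than the paper's two-line argument --- the paper only extracts $x_{j}\in\{0,a\}$ from the first equation with $k=j$ (which tacitly assumes $j\neq l$) and never explicitly checks the $l$-th equation, whereas you handle the case $j=l$ and reduce the $l$-th equation to $(a-x_{j})(a-x_{l})=0$ via the factorization $\sum_{i}\sum_{p\neq j}x_{i}x_{p}=a(a-x_{j})$.
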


\begin{pro}\begin{itemize}\label{pro3} Let
$x^{(0)}=(x^{(0)}_{1},...,x^{(0)}_{n})\in S_{a}^{n-1}$ be an
initial point.

\item[1)]If $j=l$ then $$\lim_{m\longrightarrow
\infty}U^{m}(x^{(0)})=\lim_{m\longrightarrow
\infty}x^{(m)}={\bf x}=(\underbrace{0,...,0,a}_{j},\underbrace{0,...0}_{n-j})$$

\item[2)]If $j\neq l$ then
$$\lim_{m\longrightarrow
\infty}U^{m}(x^{(0)})=\lim_{m\longrightarrow \infty}x^{(m)}=
\left\{\begin{array}{ll}
    {\bf x}_{1,l}, \ \ \hbox{if} \ \ x^{(0)}_{j}\neq a, \\[2mm]
    {\bf x}_{2,j}, \ \ \hbox{if} \ \ x^{(0)}_{j}=a
\end{array}\right.$$
\end{itemize}
\end{pro}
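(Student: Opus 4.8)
The plan is to analyze the operator $U$ in \eqref{4.b} componentwise, exploiting the fact that the $j$-th coordinate evolves almost autonomously. First I would observe that the equation for $x_j'$ (using that $k=j$ falls in the first line of \eqref{4.b} when $j\neq l$) reads $x_j' = a^{-1}x_j x_j = a^{-1}x_j^2$, or more carefully, I would write $x_j'=a^{-1}x_j x_{j}$ — wait, the map is $x_k'=a^{-1}x_k x_j$, so for $k=j$ we get $x_j^{(m)}=a^{-1}\big(x_j^{(m-1)}\big)^2$. This is a one-dimensional quadratic recursion on $[0,a]$ whose only fixed points are $0$ and $a$; since $a^{-1}x^2 \le x$ on $[0,a]$ with equality only at the endpoints, the sequence $x_j^{(m)}$ is monotone decreasing and converges to $0$ whenever $x_j^{(0)}<a$, and stays at $a$ if $x_j^{(0)}=a$. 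This handles the dichotomy in part 2) and, when $j=l$, forces $x_j^{(m)}\to a$ — because if $j=l$ the extra nonnegative term $\sum_i\sum_{p\neq j}x_ix_p$ added to $x_j'$ means $x_j^{(m)}\ge x_j^{(m-1)}$, hence monotone increasing to $a$; I should check that the increasing sequence cannot stall below $a$, which follows since the only fixed point is $a$ in that case by the first Proposition.

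Next, for the remaining coordinates I would use the already-established convergence of $x_j^{(m)}$. In case $j=l$: once $x_j^{(m)}\to a$, for any $k\neq j$ we have $x_k^{(m)} = a^{-1}x_j^{(m-1)}x_k^{(m-1)}$, and since $\sum_i x_i^{(m)} = a$ with $x_j^{(m)}\to a$, all other coordinates are squeezed to $0$; that gives convergence to ${\bf x}$. In case $j\neq l$ with $x_j^{(0)}<a$: here $x_j^{(m)}\to 0$. For $k\notin\{j,l\}$ the recursion $x_k^{(m)}=a^{-1}x_j^{(m-1)}x_k^{(m-1)}$ again forces $x_k^{(m)}\to 0$ (the multiplicative factor $a^{-1}x_j^{(m-1)}\to 0$), and then $x_l^{(m)}=a-\sum_{k\neq l}x_k^{(m)}\to a$, yielding ${\bf x}_{1,l}$. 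In case $j\neq l$ with $x_j^{(0)}=a$: then $x_j^{(m)}=a$ for all $m$, so $\sum_{k\neq j}x_k^{(m)}=0$, i.e. the point is already ${\bf x}_{2,j}$ (note the constraint $\sum x_i=a$ forces this immediately), and the limit is ${\bf x}_{2,j}$.

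After establishing convergence of the $x^{(m)}$ dynamics I would, if the Proposition is meant to include the full phase variables, invoke Lemma \ref{l2n} (the relation $y_k^{(m)}=\beta x_k^{(m)}$) to transfer the limits to $S_a$; but as stated here the Proposition is phrased purely in terms of $x^{(m)}\in S_a^{n-1}$, so the argument above via \eqref{4.b} is self-contained and no further reduction is needed. The main obstacle I anticipate is the boundary case analysis when some initial coordinates vanish and when $x_j^{(0)}$ is close to but not equal to $a$: one must be sure the monotone sequence $x_j^{(m)}$ genuinely leaves no positive mass in the other coordinates in the limit, which is where the exact conservation law $\sum_i x_i^{(m)}=a$ (equation \eqref{ab}, restated for $U$) does the essential work and must be cited carefully. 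A secondary, purely bookkeeping difficulty is keeping straight the two sub-cases $k=j$ versus $k\neq j$ inside the "$k\neq l$" branch of \eqref{4.b}, since the coordinate $x_j$ plays a dual role as both an evolving variable and the universal multiplier.
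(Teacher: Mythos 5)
Your proof is correct, and for part 1) it is essentially the paper's argument: the authors likewise show that $x_l^{(m+1)}=a-a^{-1}x_l^{(m)}(a-x_l^{(m)})\geq x_l^{(m)}$ is non\-decreasing and bounded by $a$, that the remaining coordinates are non\-increasing, and then identify the limit from the fixed\--point equations of the recursion (you instead squeeze the remaining coordinates to $0$ via the conservation law $\sum_i x_i^{(m)}=a$, which is an equivalent and slightly cleaner finish). For part 2) your route genuinely differs: the paper writes down an explicit closed\--form solution of the recursion, namely $x_k^{(m+1)}=a\bigl(a^{-2}x_k^{(0)}x_j^{(0)}\bigr)^{2^m}$ for $k\neq l$ together with a matching formula for $x_l^{(m+1)}$, and reads the limits off directly; this buys an explicit (doubly exponential) rate of convergence. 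You instead exploit that $x_j$ evolves autonomously via $x_j^{(m)}=a^{-1}\bigl(x_j^{(m-1)}\bigr)^2$, deduce $x_j^{(m)}\to 0$ when $x_j^{(0)}<a$, kill the other coordinates $k\neq l$ through the multiplier $a^{-1}x_j^{(m-1)}\to 0$, and recover $x_l^{(m)}\to a$ from the conservation law. Your qualitative argument is complete and, if anything, more robust: the paper's displayed closed form for general $k\neq j$ does not exactly match what iteration of $x_k^{(m+1)}=a^{-1}x_k^{(m)}x_j^{(m)}$ produces (the correct exponent of $x_j^{(0)}/a$ is $2^{m+1}-1$, not $2^m$, for $k\neq j$), though this does not affect the limits; your approach avoids that bookkeeping entirely at the cost of not exhibiting the convergence rate.
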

\begin{proof}\begin{itemize}
\item[1)] Let $j=l$.  From (\ref{4.b})  we get
\begin{equation}\label{4.c}
U^{(m+1)}(x^{(0)}):\left\{%
\begin{array}{ll}
    x_{k}^{(m+1)}=a^{-1}x_{k}^{(m)}x_{l}^{(m)}, \ \ k=1,\dots, n, \ \ k\neq l    \\[3mm]
    x_{l}^{(m+1)}=a^{-1}(x_{l}^{(m)}x_{l}^{(m)}+\sum_{i=1}^{n}\sum_{l\neq p=1}^{n}x_{i}^{(m)}x_{p}^{(m)}).  \\[3mm]
    \end{array}%
\right.\end{equation}

Since $\sum_{i=1}^{n}x_{i}^{(m)}=a$, $x_{l}^{(m)}\in [0,a]$ we
have
$$x_{l}^{(m+1)}=a^{-1}(x_{l}^{(m)}x_{l}^{(m)}+\sum_{i=1}^{n}\sum_{l\neq p=1}^{n}x_{i}^{(m)}x_{p}^{(m)})=$$ $$a-a^{-1}x_{l}^{(m)}(a-x_{l}^{(m)})\geq a-a^{-1}a(a-x_{l}^{(m)})=x_{l}^{(m)},$$
$$x_{k}^{(m+1)}=a^{-1}x_{k}^{(m)}x_{l}^{(m)}\leq a^{-1}x_{k}^{(m)}a
= x_{k}^{(m)}, \ \ k=1,\dots, n, \ \ k\neq l.$$

Thus $x_{l}^{(m)}$ is a non-decreasing sequence, which bounded
from above by $a$ the sequence $x_{k}^{(m)}$, $k=1, \dots, n$,
$k\neq l$ is a non-increasing and with lower  bound 0.
Consequently, each $x_{i}^{(m)}$ has a limit say $\alpha_{i}$, $i=1,2,....n.$

From equations of (\ref{4.c}) for limit values $\alpha_{1}$ we get
the following equations
\begin{equation}\label{4.d}
\left\{%
\begin{array}{ll}
    \alpha_{l}=a-a^{-1}\alpha_{l}(a-\alpha_{l}),  \\[3mm]
    \alpha_{k}=a^{-1}\alpha_{k}\alpha_{l}, \ \ k=1,\dots, n, \ \ k\neq l.\\[3mm]
    \sum_{i=1}^{n}\alpha_{i}=a
\end{array}%
\right.\end{equation}

It is easy to see that the system (\ref{4.d}) has the following
solution:

$$\alpha_{l}=a,\ \ \alpha_{k}=0,\ \ k=1,\dots, n, \ \ k\neq l.$$

\item[2)] Let $j\neq l$.  From (\ref{4.b})  we get
\begin{equation}\label{4.e}
U^{(m+1)}(x^{(0)}):\left\{%
\begin{array}{ll}
    x_{k}^{(m+1)}=a(a^{-2}x_{k}^{(0)}x_{j}^{(0)})^{2^{m}}, \ \ k=1,\dots, n,\ \ k\neq l    \\[3mm]
    x_{l}^{(m+1)}=a(1-(a^{-1}x_{j}^{(0)})^{2^{m}}(a-x_{l}^{(0)})x_{j}^{-(0)}).  \\[3mm]
\end{array}%
\right.\end{equation} It is easy to see that the system
(\ref{4.e}) has the following limit:

if $x_{j}^{(0)}=a$ then
$$\lim_{m\longrightarrow
\infty}U^{(m+1)}(x^{(0)})=\lim_{m\longrightarrow
\infty}x^{(m+1)}_{k}= \left\{\begin{array}{ll}
    a, \ \ \hbox{if} \ \ k=j, \\[2mm]
    0, \ \ \hbox{if} \ \ k=1,\dots, n, \ \ k\neq j
\end{array}\right.$$
if $x_{j}^{(0)}\neq a$ then
$$\lim_{m\longrightarrow
\infty}U^{(m+1)}(x^{(0)})=\lim_{m\longrightarrow
\infty}x^{(m+1)}_{k}= \left\{\begin{array}{ll}
    a, \ \ \hbox{if} \ \ k=l, \\[2mm]
    0, \ \ \hbox{if} \ \ k=1,\dots, n, \ \ k\neq l
\end{array}\right.$$
\end{itemize}\end{proof}

Summarizing we obtain the following
\begin{thm}\begin{itemize} Let
$z^{(0)}=(x^{(0)}_{1},...,x^{(0)}_{n};y^{(0)}_{1},...,y^{(0)}_{n})\in
S_{a}^{2n-2}$ be an initial point.

\item[1)]If $j=l$ then $$\lim_{m\longrightarrow
\infty}W^{(m)}(x^{(0)})=\lim_{m\longrightarrow
\infty}z^{(m)}=(\underbrace{0,...,0,a}_{j},\underbrace{0,...0}_{n-j};\underbrace{0,...,0,1-a}_{j},\underbrace{0,...,0}_{n-j})$$

\item[2)]If $j\neq l$ then
$$\lim_{m\longrightarrow
\infty}W^{(m)}(x^{(0)})=\lim_{m\longrightarrow \infty}z^{(m)}=$$ $$
\left\{\begin{array}{ll}
    z_{1,l}=(\underbrace{0,...,0,a}_{l},\underbrace{0,...0}_{n-l};\underbrace{0,...,0,1-a}_{l},\underbrace{0,...,0}_{n-l}), \ \ \hbox{if} \ \ x^{(0)}_{j}\neq a, \\[2mm]
    z_{2,j}=(\underbrace{0,...,0,a}_{j},\underbrace{0,...0}_{n-j};\underbrace{0,...,0,1-a}_{j},\underbrace{0,...,0}_{n-j}), \ \ \hbox{if} \ \ x^{(0)}_{j}=a,
\end{array}\right.$$
\end{itemize}
\end{thm}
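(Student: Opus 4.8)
The plan is to deduce the final theorem from Proposition \ref{pro3} exactly in the way the earlier theorems in the excerpt are deduced from their one-variable (or $x$-only) counterparts, namely by invoking Lemma \ref{l1}, Lemma \ref{l2} and Lemma \ref{l2n} to transfer the convergence statement for the $x$-component operator $U$ on $S_a^{n-1}$ to the full operator $W$ on $S_a^{2n-2}$. So the first step is to observe that by Lemma \ref{l1} it suffices to treat an initial point $z^{(0)}\in S_a^{2n-2}$, and by Lemma \ref{l2n} the iterates satisfy $y_k^{(m)}=\beta x_k^{(m)}$ for all $k$ and all $m\geq 1$, so the whole trajectory $z^{(m)}$ is completely determined by the $x$-trajectory $x^{(m)}=U^m(x^{(0)})$ (after the reindexing $u_i=x_i/a$ only rescales; here the operator is already written directly in the $x$-variables as $U$ in (\ref{4.b})).

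Second, I would apply Proposition \ref{pro3}: in case $j=l$ it gives $x^{(m)}\to(\underbrace{0,\dots,0,a}_{j},\underbrace{0,\dots,0}_{n-j})$, and in case $j\neq l$ it gives $x^{(m)}\to\mathbf{x}_{1,l}$ if $x_j^{(0)}\neq a$ and $x^{(m)}\to\mathbf{x}_{2,j}$ if $x_j^{(0)}=a$. Third, combine this with $y_k^{(m)}=\beta x_k^{(m)}$ and $\beta=(1-a)/a$: if the $x$-limit has $a$ in slot $j$ (or $l$) and zeros elsewhere, then the $y$-limit has $\beta a=1-a$ in the same slot and zeros elsewhere, which is exactly the second half of each displayed limit point $z_{1,l}$, $z_{2,j}$ in the statement. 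Passing the limit through the relation $y_k^{(m)}=\beta x_k^{(m)}$ is legitimate since it is a fixed linear identity holding for every $m$.

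Finally I would note the only small point requiring care: Proposition \ref{pro3} and its limit formulas concern the operator $U$ in its own variables, and one must make sure the identification of $U$ with the $x$-part of $W$ on $S_a$ is the correct one — i.e. that iterating $W$ on $S_a$ and then projecting to the $x$-coordinates really does coincide with iterating $U$. This is precisely the content of Lemma \ref{l2n} (with the specialization of $\theta_{ipk}$ chosen in this section), so there is no genuine obstacle; the proof is essentially a transcription. I therefore expect no hard step — the work is bookkeeping of indices and the substitution $y_k=\beta x_k$ — and the proof can be written in two lines:

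\begin{proof} By Lemma \ref{l1} it suffices to consider $z^{(0)}\in S_a^{2n-2}$, and by Lemma \ref{l2n} we have $y_k^{(m)}=\beta x_k^{(m)}$ for all $k=1,\dots,n$ and all $m\geq 1$, where $\beta=(1-a)/a$ and $x^{(m)}=U^m(x^{(0)})$ with $U$ given by (\ref{4.b}). Hence the assertions follow from Proposition \ref{pro3}: passing to the limit in $y_k^{(m)}=\beta x_k^{(m)}$ and using $\beta a=1-a$ turns the $x$-limits of Proposition \ref{pro3} into the stated limits of $z^{(m)}$. \end{proof}
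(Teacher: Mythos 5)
Your proposal is correct and matches the paper's own (implicit) derivation: the paper simply states the theorem as a summary obtained by combining Proposition \ref{pro3} with the reduction $y_k^{(m)}=\beta x_k^{(m)}$ from Lemma \ref{l2n}, exactly as you do. The only substantive step — passing the identity $y_k^{(m)}=\beta x_k^{(m)}$ to the limit and using $\beta a=1-a$ — is handled correctly in your write-up.
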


\section*{ Acknowledgements}

This work was partially supported  by Agencia Estatal de Investigaci\'on (Spain), grant MTM2016-
79661-P (European FEDER support included, UE)

\end{document}